\newtheorem{theorem}{Theorem}[section]
\newtheorem{thm}[theorem]{Theorem}
\newtheorem{prop}[theorem]{Proposition}
\newtheorem{claim}[theorem]{Claim}
\newtheorem{convention}[theorem]{Convention}
\newtheorem{lemma}[theorem]{Lemma}
\theoremstyle{definition}
\newtheorem{definition}[theorem]{Definition}
\newtheorem{example}[theorem]{Example}
\newcommand{\NN}{{\mathbb{N}}}
\newcommand{\sub}{\subseteq}
\newcommand{\sN}[1]{_{#1\in \NN}}
\newcommand{\uhr}[1]{\! \upharpoonright_{#1}}
\newcommand{\ML}{Martin-L{\"o}f}
\newcommand{\SI}[1]{\Sigma^0_{#1}}
\newcommand{\bi}{\begin{itemize}}
\newcommand{\ei}{\end{itemize}}
\newcommand{\bc}{\begin{center}}
\newcommand{\ec}{\end{center}}
\newcommand{\ES}{\emptyset}
\newcommand{\tp}[1]{2^{#1}}
\newcommand{\lep}{\le^+}
\newcommand{\seqcantor}{2^{ \NN}}
\newcommand{\cantor}{\seqcantor}
\newcommand{\strcantor}{2^{ < \omega}}
\newcommand{\ROpcl}[2]{[#2]^\prec_{#1}}
\newcommand{\n}{\noindent}
\newcommand{\sss}{\sigma}
\newcommand{\aaa}{\alpha}
\DeclareMathOperator{\Dim}{\textit{Dim}}
\newcommand{\lland}{\, \land \, }
\newcommand\+[1]{\mathcal{#1}}
\newcommand{\ol}{\overline}
\newcommand{\ul}{\underline}
\newcommand{\ape}{\, \hat{\ } \, }
\newcommand{\sssl}{\ensuremath{|\sigma|}}
\newcommand{\range}{\ensuremath{\mathrm{range}}}
\DeclareMathOperator{\diam}{diam}
\numberwithin{equation}{section}
\renewcommand{\hat}{\widehat}
\begin{document}

 \title{Fractal dimensions  and profinite groups}
\subjclass[2000]{} 
\author[E. Mayordomo]{Elvira Mayordomo}
\address{Departamento de Inform{\'a}tica e Ingenier{\'\i}a de Sistemas,
Instituto de Investigaci{\'o}n en Ingenier{\'\i}a de Arag{\'o}n,
Universidad de Zaragoza, 50018 Zaragoza, Spain}
\thanks{Mayordomo was supported
in part by Spanish Ministry of Science and Innovation grants
TIN2016-80347-R and PID2019-104358RB-I00  PID2022-138703OB-I00 and
by the Science dept. of Aragon Government: Group Reference
T64$\_$20R (COSMOS research group). }
\author[A. Nies]{Andr\'e Nies}
\address{School of Computer Science, University of Auckland, New Zealand}
 \thanks{Nies was supported in part by the Marsden fund of New Zealand, grant UoA-1931}
 \date{\today}
\maketitle

\begin{abstract} Let $T$  be  a finitely branching rooted tree such that any node has at least two successors. The path space $[T]$ is   an ultrametric space:  for  distinct paths $f,g$ let $d(f,g)= 1/|T_n|$, where  $T_n$ denotes the $n$-th level of the tree, and $n$ is largest  such that $f(n)= g(n)$. Let $S$ be a subtree of $T$ without leaves that is level-wise uniformly branching, in the sense that the number of successors of a node only depends on its level. We~show that the Hausdorff and lower box dimensions coincide for~$[S]$, and the packing and upper box dimensions also coincide. We give geometric proofs, as well as proofs based on the point-to-set principles. We use the first result to reprove  a theorem of Barnea and Shalev on the Hausdorff dimension of closed subgroups of a profinite group $G$, referring only on the geometric structure of the closed subgroup in the canonical   path space given  by an inverse system for $G$.  We obtain an analogous theorem for the packing dimension.
\end{abstract} 

\tableofcontents

\section{Introduction} We study fractal dimensions in the ultrametric setting. Our initial  motivation was  a description of the Hausdorff dimension of   a closed subgroup~$U$ of  a profinite group  $G$   by Barnea and Shalev \cite[Thm.\ 2.4]{Barnea.Shalev:97}, where the    metric is given by an inverse system $(L_n, p_n) \sN n$ for $G$. They showed that this dimension equals the lower box  dimension: \bc   $\dim_H(U) =\liminf_n \log |U_n|/\log |L_n|$,  \ec where $U_n$ is the canonical projection of $U$ into $L_n$.
We will show that  this equality can be obtained without making reference to  the group structure. Instead, one can    analyse the  fractal dimensions for   closed subsets of path spaces of finitely branching rooted trees that are  given by level-wise uniformly branching subtrees.  

 For the result on closed subsets of path spaces, we give  a geometric proof,  as well as a proof  using  the point-to-set principle in the form given in  \cite[Thm.\ 4.1]{Mayordomo.etal:23}, which   determines the dimension of a set via    an  algorithmic notion of dimension for individual points. In the same  two-pronged manner  we analyse  the packing dimension of such sets, and apply the result to profinite groups as well. 
 We include the   second  approach for three reasons.   Unlike the geometric  proofs, the proofs based on   point-to-set principles    use no preliminary results from the theory of fractional dimensions:  neither   the ``easy” inequalities~(\ref{eqn:dim inequs}), nor  Tricot’s result~\cite{Tricot:82} on packing dimension.
 Also, this line of argument promises to be extendable to settings less restrictive than the level-wise uniformly branching tree. Finally, this approach leads to a result of interest on its  own (Prop.\ \ref{cl:T3}) on  Martin-L\"of random paths on computable trees.

 \subsection{Path  spaces} Throughout, $T$ will denote a finitely branching rooted tree such that any node has at least two successors. For $n \ge 0$  we denote the $n$-level of $T$ by~$T_n$.  By a \textit{path} of $T$ one means a sequence $f = (a_0, a_1, \ldots)$ such that $a_n \in T_n$ and $a_{n+1} $ is a successor of $a_n$, for each $n$.  The set  $[T]$ of paths on $T$ is a totally disconnected compact space. The clopen sets are the  finite unions of sets $[\sss] = \{ f \colon    f(n) = \sss\}$, where  $\sss \in T_n$. 

 There is a natural ultrametric~$d$ on~$[T]$: For distinct $f,g \in [T]$, 
\begin{equation}  \label{eqn:distance tree} d(f,g)= \min  \{ 1/|T_n| \colon \, f(n) = g(n)\}.\end{equation}

Since every node $\sss$ on $T$ has at least two successors, the   set  $[\sigma]$ has diameter $1/|T_n|$ for $\sigma \in T_n$.

\subsection{Box  dimensions of closed subsets of $[T]$}  We will  study fractal dimensions of closed subsets of $[T]$.    The lower and upper box   (or Minkowski) dimension   of a bounded  set $X$ in a metric space~$M$ are
\begin{eqnarray} \label{eqn:lower box} \ul \dim_B(X)&=&
\liminf_{\alpha \to 0^+} \frac{\log N_\alpha(X)}{ \log (1/\alpha)} \\ 
\label{eqn:upper box} \ol \dim_B(X)&=&
\limsup_{\alpha \to 0^+} \frac{\log N_\alpha(X)}{ \log (1/\alpha)}  
\end{eqnarray}
where $N_\alpha(X)$ is the least cardinality  of a covering of $X$ with sets of diameter at most $\alpha$. 
 Now let  $M =[T]$ for a tree $T$ as above, and  write $t_n = |T_n|$. A~closed subset $X$ of $[T]$  has the form  $[S]$ for a unique subtree $S$ of $T$ without leaves.  
 If $t_{n-1}^{-1} > \alpha \ge t_n^{-1} $, then  we may assume that the sets in   a covering as above  are of the form $[\sss]$ for $\sss \in T_n$, because  each set of  diameter at most~$\aaa$ is contained in such a set $[\sss]$. So,  we have $N_\alpha(X)= |S_n|$. It follows that
\begin{eqnarray} \label{eqn:dimB}  \ul \dim_B([S])&=& \liminf_{n \to \infty} \frac{ \log |S_n|}{\log |T_n|}\\ 
\label{eqn:dimB2}  \ol \dim_B([S])&=& \limsup_{n \to \infty} \frac{ \log |S_n|}{\log |T_{n-1}|},   \end{eqnarray}
using that    $\log (1/ \alpha )\le \log |T_n|$ in (\ref{eqn:dimB}) and  $ \log |T_{n-1}|<  \log (1/ \alpha )$ in (\ref{eqn:dimB2}).

\subsection{Box  dimensions versus   Hausdorff/packing  dimension}   
 By $V^*$ one denotes the set of tuples over a set $V$, seen as a rooted tree where the successor relation is given by appending one element of $V$. A \emph{subtree} of $V^*$ is a nonempty subset closed under initial segments. We will always view a tree $T$ as a subtree of $\NN^*$. Thus, $T_k$ consists of the strings in $T$ that have  length $k$.
\begin{definition} \label{df:ubr}  We say that a tree $S \sub \NN^*$ is \emph{level-wise uniformly branching} if for each $n$, each $\sss \in S_n$ has the same number of successors on $S$. 
  \end{definition}

\begin{example} Let $T = \{0,1 \}^*$. Let $S$ be a  level-wise uniformly branching subtree of $T$ such that for  each even $k$, for each $n \in [4^k,4^{k+1})$, strings at level~$n$ have one successor, and for  each odd $k$, for each $n \in [4^k,4^{k+1})$, strings at level $n$ have two successors. Then $\ul \dim_B([S]) \le 1/4$ and $\ol \dim_B([S])\ge 3/4$.
\end{example}

For background on Hausdorff dimension   and also packing dimension $\dim_P$, see for instance the first two chapters of \cite{Bishop.Peres:17}.  Here we merely review  the pertinent definitions. Let  $X$ be a bounded  set in a metric space~$M$.

For $r\ge 0$, by $H^r(X)$ one  denotes the $r$-dimensional Hausdorff measure 
 \begin{equation} \label{Hdim} H^r(X) = \lim_{\aaa \to 0^+} ( \inf_{(E_i)} \sum_{i \in I} \diam(E_i)^r),\end{equation}  where the infimum is taken over all  covers $(E_i)_{i \in I}$  of $X$ by sets of diameter at most $\aaa$.  By definition, $\dim_HX$ is the supremum  of the $r$ such that $H^r(X) >0$. 
 
 For $r\ge 0$, by $P^r(X)$ one  denotes the $r$-dimensional packing measure
  \begin{equation}
      P^r(X)=\inf_{\mathcal{U}}\left\{\sum_{U\in\mathcal{U}} \lim_{\aaa\to 0^+}(\sup\sum_{i \in I}\diam(E_i^U)^r)\right\},           \end{equation}
 where the infimum is taken over all countable coverings $\+ U$ of $X$, and the   inner suprema are taken over all countable collections of disjoint open balls $(E_i^U)_{i \in I}$ with center in $U$ and diameter at most $\aaa$.  
Now   $\dim_P(X)$ is the supremum  of the $r$ such that $P^r(X) >0$.
 It is known that 
 \begin{equation}
\label{eqn:dim inequs} \dim_H(X) \le \ul \dim_B(X)  \text{ and }  \dim_P(X) \le \ol \dim_B(X)
 \end{equation}
   for any bounded subset $X$ of a metric space $M$  \cite[p.\ 72]{Bishop.Peres:17}.   

%
\section{Coincidences of dimensions}  Let $S$ be  a   level-wise uniformly branching subtree   of $T$.  View $[S]$ as a subset of the metric space~$[T]$. We    show   that the Hausdorff and lower box dimensions of $[S]$   coincide,  and  the  packing and upper box dimensions also  coincide.  This section  gives direct proofs of these facts.  The final section  provides alternative proofs    using  an algorithmic theory, in particular \ML\ randomness on $[S]$ and the point-to-set principles in metric spaces due to  Lutz et al.\ \cite{Mayordomo.etal:23}. Those proofs   directly obtain the equalities from the point-to-set principles, rather than relying on the   inequalities (\ref{eqn:dim inequs}).  

 

\begin{thm} \label{prop: dims equal} Let $S\sub T $ be trees such   that  each node on $T$ has at least two successors. Suppose  that $S$ is level-wise uniformly branching.  Then in the metric space $[T]$ we have  \begin{eqnarray*} \dim_H[S]  & =&  \ul \dim_B[S], \text{  and}   \\
\dim_P[S] &= &\ol \dim_B[S].\end{eqnarray*}
\end{thm}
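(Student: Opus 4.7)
The plan is to establish the reverse of each inequality in (\ref{eqn:dim inequs}) via the uniform probability measure $\mu_S$ on $[S]$, whose key feature under the level-wise uniform branching hypothesis is the identity $\mu_S([\sigma]) = 1/|S_n|$ for every $\sigma \in S_n$. This ties $\mu_S$ directly to the covering counts that appear in (\ref{eqn:dimB}) and (\ref{eqn:dimB2}).

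\emph{Hausdorff equals lower box.} Fix $s < \ul \dim_B[S]$; by (\ref{eqn:dimB}), $|S_n| \ge |T_n|^s$ for all sufficiently large $n$. Because $[T]$ is ultrametric with cylinder balls, any ball $B(x,r)$ with $x \in [S]$ satisfies $B(x,r)\cap [S] = [x \uh m]$ for the least $m$ with $|T_m|^{-1} \le r$. Hence $\mu_S(B(x,r)) = 1/|S_m| \le 1/|T_m|^s \le r^s$ for all sufficiently small $r$. The mass distribution principle then gives $H^s([S]) > 0$, so $\dim_H[S] \ge s$. Letting $s \to \ul \dim_B[S]$ and combining with (\ref{eqn:dim inequs}) yields the first equality.

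\emph{Packing equals upper box.} Fix $s < \ol \dim_B[S]$; by (\ref{eqn:dimB2}) there are infinitely many $n$ with $|S_n| \ge |T_n|^s$. Setting $r_n = |T_n|^{-1}$ for such $n$, every $x \in [S]$ satisfies $\mu_S(B(x,r_n)) = 1/|S_n| \le r_n^s$, so the upper local dimension $\ol d_{\mu_S}(x) := \limsup_{r \to 0^+} \log \mu_S(B(x,r))/\log r$ is at least $s$ at every point of $[S]$. The packing-dimension counterpart of the mass distribution principle (cf.\ \cite{Bishop.Peres:17}) then gives $\dim_P[S] \ge s$, and letting $s \to \ol \dim_B[S]$ concludes.

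\emph{Main obstacle.} Almost all of the content is packaged into the two classical mass-distribution-style principles for $\dim_H$ and $\dim_P$; the only step requiring care is the translation from the bound $\mu_S([\sigma]) = 1/|S_n|$ at the special radii $|T_n|^{-1}$ to a bound on $\mu_S(B(x,r))$ for \emph{all} small $r$. In the ultrametric setting this is immediate, since every ball $B(x,r)$ literally equals a level cylinder $[x \uh m]$, and it is precisely this collapse that makes the level-wise uniform branching hypothesis sufficient to force equality with the box dimensions.
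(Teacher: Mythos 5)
Your proof is correct, and it takes a genuinely different route from the paper's. For the Hausdorff inequality the paper argues directly on covers: it reduces an arbitrary cover to a finite prefix-free set $F \subseteq S$ of cylinders and then, by an exchange step that uses level-wise uniform branching to transplant the part of $F$ above a cheapest node of $S_k$ onto every other node of $S_k$, pushes $F$ down to a full level, showing that every admissible cover has $r$-cost at least $s_k t_k^{-r} \ge 1$. You instead invoke the mass distribution principle for the uniform measure $\mu_S$, with the hypothesis entering only through $\mu_S([\sigma]) = |S_n|^{-1}$ and the ultrametric identification of balls with level cylinders. For the packing inequality the paper uses Tricot's theorem that $\dim_P$ dominates the infimum of $\ol{\dim}_B(V)$ over nonempty relatively open $V$, checking that every basic open subset of $[S]$ has the same upper box dimension; you use the upper-local-density (packing) counterpart of the mass distribution principle instead. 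Your treatment is more unified -- one measure and two standard density lemmas -- and somewhat shorter; the paper's Hausdorff argument is self-contained and elementary (it needs nothing beyond the definition of $H^r$ and yields the explicit bound $H^r[S]\ge 1$), while its packing argument, like yours, rests on a cited black box. Two cosmetic points in your write-up: $B(x,r)\cap[S]$ equals $[x\uhr m]\cap[S]$ rather than $[x\uhr m]$; and when applying the mass distribution principle one should note that an arbitrary covering set of diameter $d$ meeting $[S]$ is contained in a ball of radius $d$ about any of its points, so the ball estimate does bound $\mu_S(E_i)$ by $\diam(E_i)^s$ -- immediate in the ultrametric setting, as you indicate.
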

\begin{proof}  It suffices to show the inequalities ``$\ge$", as the   inequalities  ``$\le$" always hold by  (\ref{eqn:dim inequs}).  We may assume that $T$ is a subtree of $ \NN^*$.
\medskip 

\n 1.  For the inequality $\dim_H [S] \ge \ul \dim_B [S]$, we may assume $\ul \dim_B [S]>0$. It suffices to  show  that $H^r[S]\ge 1$ for each   $r < \ul \dim_B [S]$ such that  $r > 0$.  

We call a tree $X \sub \NN^*$ \textit{left-closed} if $\sss \ape i \in X $ and $k < i$ implies $\sss \ape k \in X$. By the definition of the ultrametric (\ref{eqn:distance tree}), for each tree $Y$, there is a left-closed tree $X$ such that $[Y]$ is isometric to $[X]$. So we can assume that both $T$ and $S$ are left closed.

We first provide  a reduction of the type of coverings we need to consider. Write $t_n = |T_n| $ and $s_n= |S_n|$.  Suppose  we have a covering $(E_i)_{i \in I}$  of $[S]$ by sets of diameter at most $\aaa$, where $t_{n-1}^{-1} > \aaa \ge t_n^{-1}$.  In evaluating (\ref{Hdim})  we can assume that $\aaa = t_n^{-1}$.  The \emph{$r$-cost} of a family $(E_i)_{i \in I}$  is by definition
\bc $V_r(E_i)_{i \in I} =\sum_{i \in I} \diam(E_i)^r$. \ec 
 In  evaluating (\ref{Hdim}) we can further assume that  each $E_i$ intersects $[S]$. Also, without incurring a larger $r$-cost we can replace a covering of~$[S]$ by a  covering with sets that contain the original sets but have   no larger diameter. So  we can assume that each $E_i$ is of the form~$[\sss]$ for some $\sss \in S$, where $\sssl \ge n$. Since $[S]$ is compact and each set of the form $[\sss]$ is open, we can now assume that $I$ is finite. To summarise,   in  evaluating (\ref{Hdim}),  the only coverings of $[S]$ we need to consider are      coverings $\{ [\sss] \colon \sss\in F \}$ where $F \sub S$ is a finite prefix-free  set of strings of length at least $n$.  We will  identify such a set $F\sub S$ with the covering   it describes, and in particular write $V_r(F)$ for the $r$-cost of the  covering.  So $V_r(F) = \sum_{\rho \in F } t_{|\rho|}^{-r}$.
 
 \begin{claim} Given $n$, let $F \sub S$ be as above. Then $V_r(F) \ge s_kt_k^{-r}$ for some $k \ge n$.  \end{claim}
\n  To verify  this, we keep replacing $F$ by a  covering  $F'$ of $[S]$ of no larger $r$-cost,  until we reach a covering  of the form $\{[\sss] \colon \sss \in S_k\}$ for some $k \ge n$, which then has the required $r$-cost $s_kt_k^{-r}$. 
 
 Let $k\ge n$ be the least length of a string in $F$. For $\sss \in S_k$, write
 \[ v(\sss) = V_r\{ \rho \in F \colon \, \rho \succeq \sss\}. \]
 Note that $V_r(F)= \sum_{\sss \in S_k} v(\sss)$.   Pick $\sss \in S_k$ such that $v(\sss)$  is least. 
 
 \bi \item[(a)] If  $\sss \in F$, then $V_r(\{\sss\})= t_k^{-r}$, whence $V_r(F) \ge s_kt_k^{-r}$. So we end the process.
 \item[(b)]  Otherwise, let
 $ F' = \{ \eta \tau \colon \, \eta \in S_k \lland \sss \tau \in F\}$. Intuitively, above each $\eta \in S_k$, put a copy of what $F$ contains  above $\sss$.  
 Since $S$ is level-wise uniformly branching and left-closed, we have $F' \sub S$ and $F'$ is a covering of $[S]$. Also, $V_r(F') \le V_r(F)$, each  string in $F'$ has a length greater than $k$, $F'$ is prefix-free, and the maximum length of  strings in $F'$ is at most the one of $F$.  So we can continue the process with $F'$ instead of $F$.
 \ei 
After finitely many steps the process ends at (a), delivering the  covering of $[S]$ of the required form.  This shows the claim.

Since  $r < \ul \dim_B [S]$, there is $K \in \NN$ such that $\log s_k/\log t_k \ge r $ for each $k \ge K$, and hence $s_kt_k^{-r}\ge 1$.  For $\aaa < t_K^{-1}$, if  $(E_i)_{i \in I}$ is a covering of~$[S]$ such that $\diam (E_i) \le \aaa$ for each $i \in I$, then by the reductions and the  claim we have $V_r(E_i)_{i \in I} \ge  s_kt_k^{-r}$ for some $k \ge K$, and hence $V_r(E_i)_{i \in I} \ge 1$. Thus $H^r[S]\ge 1$ as required. 

\medskip 

\n 2.  $\dim_P [S] \ge \ol \dim_B [S]$. Our  proof     is based on 
   a  result of Tricot~\cite{Tricot:82} (also see \cite[Lemma 2.8.1(i)]{Bishop.Peres:17}). By that result,  since~$[S]$ as a   metric space is complete, 
\[\dim_P [S]  \ge \inf \{ \ol \dim_B ( V) \colon V  \neq \ES \text{ open in }  [S]\};\]
By the monotonicity of $\ol \dim_B$,   we can, without changing the  value,   take the infimum over all   sets  $V$ of the form $[\sss]\cap [S]$ where $\sss \in S$. Fix such a $\sss$, and write   $r = \sssl$. Since $S$ is level-wise uniformly branching, for $n \ge r$ the $n$-th level of the tree for $[\sss]\cap [S]$ has $s_n/s_r$ elements. So, using (\ref{eqn:dimB2}) we have \[\ol \dim_B(V) =  \limsup_n \frac {\log (s_n/s_r)}{\log t_{n-1}} = \limsup_n \frac {\log (s_n)}{\log t_{n-1}} = \ol \dim_B[S].\] 
\end{proof}

The following example shows that  some hypothesis on $S$, such as being level-wise uniformly branching,  is necessary in both equalities  of Theorem~\ref{prop: dims equal}.
\begin{example} Let $T= \strcantor$ be the full binary tree. For a   string $\sss \in T$, let $k_\sss = \min \{ i \colon \, i =  \sssl \lor   \sss_i = 1\}$. Let \bc $S = \{ \sss \in T \colon \, \sss_r= 0 \text{ for each } r \text{ such that } \sssl > r \ge 2 k_\sss\}$. \ec
Clearly $[S]$ is countable, and hence $\dim_H[S]=\dim_P[S] =0$. In contrast, since  $ 2 \cdot \tp{n/2}\ge |S_n| \ge \tp{n/2}$, we have $\ul \dim_B[S]= \ol \dim_B[S] = 1/2$.  
\end{example}

\section{Dimensions of closed subgroups  of profinite groups}

Let the topological  group \[G = \varprojlim \sN n \, (L_n, p_n)\]  be the inverse limit of an inverse system of discrete finite groups, where the maps $p_n \colon L_{n+1} \to L_n$ are epimorphisms, and  $L_0$ is the trivial group.   The group  $G$ is   compact  and has   a compatible  ultrametric, where the distance between distinct elements $x,y$ of $G$ is $1/|L_n|$ for the largest~$n$ such that the natural projections of $x$ and $y$  into $L_n$ are equal. 
 Barnea and Shalev~\cite[Thm.\ 2.4]{Barnea.Shalev:97} showed that  the Hausdorff dimension of a closed subgroup  $U$    of $G$ equals its lower box dimension.  They relied on Abercrombie~\cite[Prop.\ 2.6]{Abercrombie:94}  for     the nontrivial part, that the lower  box dimension is no larger than the Hausdorff dimension. Abercrombie proved this via means specific to profinite groups, such as generation of subgroups and  the Haar measure.  We will show   in Theorem~\ref{th:easy dim} that this  is a  purely geometric fact: it does not rely on  the group structure, but   merely exploits,   via Theorem~\ref{prop: dims equal},   the regularity of a description of  $U$  as a subtree of the tree given by the inverse system.   
 We also   obtain the  dual fact  that the packing dimension of a closed subgroup equals its upper box dimension.

 Define  a  tree $T$ such that  the $n$-th level is  $T_n= L_n$. A string $\eta \in T_{n+1}$ is a successor of  $\sss\in T_n$  if  $p_n(\eta)= \sigma$. Each $\sigma \in T_n$  has   $|L_{n+1}|/|L_n|$  successors.  So  $T$ is level-wise uniformly branching.  There is a canonical isometry between    $G$ and $[T]$:  map an element $x$   of $G$  to the   path $f_x$  such that   $f_x(n) = r$ if the natural projection map $G\to L_{n}$ sends $x$ to~$r$. 

\begin{thm}[\cite{Barnea.Shalev:97}, Thm.\ 2.4 for the first line of equations]  \label{th:easy dim}  \ \\  Suppose   that $U$ is a closed subgroup of $G = \varprojlim \sN n \, (L_n, p_n)$.  Let $U_n$  be the projection of~$U$ into $L_n$. Then \[ \dim_H(U) = \ul \dim_B(U) = \liminf_n \frac {\log |U_n|}{\log|L_n|}\]

 \[ \dim_P(U) = \ol \dim_B(U) = \limsup_n \frac {\log |U_n|}{\log|L_{n-1}|}.\]\end{thm}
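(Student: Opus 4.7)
The plan is to verify that the construction in the paragraph preceding the theorem places us exactly within the hypotheses of Theorem~\ref{prop: dims equal}, so that both chains of equalities become direct corollaries once the box dimensions are unfolded via (\ref{eqn:dimB}) and (\ref{eqn:dimB2}).

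First I would confirm that the subtree $S\sub T$ defined by $S_n = \{f\uhr n : f\in U\}$ is level-wise uniformly branching in the sense of Definition~\ref{df:ubr}. This is the only place where the group structure is genuinely used: since $q_n\colon U_{n+1}\to U_n$ is a surjective homomorphism of finite groups, every fibre has the same cardinality $|U_{n+1}|/|U_n|$, which is precisely the number of successors in $S$ of any node $\sigma\in S_n$.

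Next I would record that the canonical map $G\to [T]$, $x\mapsto f_x$, is an isometry (both distances are determined by the first level of disagreement, normalised by $|L_n|^{-1}=|T_n|^{-1}$), and that its restriction to $U$ lands precisely on $[S]$. Since Hausdorff, packing, and box dimensions are isometry invariants, all four dimensions of $U$ coincide with the corresponding dimensions of $[S]$. Applying Theorem~\ref{prop: dims equal} then gives $\dim_H[S]=\ul\dim_B[S]$ and $\dim_P[S]=\ol\dim_B[S]$, and substituting $|T_n|=|L_n|$ and $|S_n|=|U_n|$ into (\ref{eqn:dimB}) and (\ref{eqn:dimB2}) yields the $\liminf$ and $\limsup$ expressions on the right of each displayed equation.

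There is no serious obstacle; the theorem is essentially a dictionary translation from the path-space setting back to the profinite one. The only cosmetic subtlety is the standing hypothesis in Theorem~\ref{prop: dims equal} that each node of $T$ have at least two successors, which can fail whenever some $p_n$ is an isomorphism. This is handled either by passing to the subsequence of indices $n$ for which $|L_{n+1}|>|L_n|$, which affects neither the $\liminf$ nor the $\limsup$ of $\log|U_n|/\log|L_n|$, or by noting that if cofinitely many $p_n$ are isomorphisms then $G$ itself is finite and every fractal dimension in sight vanishes trivially.
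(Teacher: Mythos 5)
Your proposal matches the paper's proof, which consists precisely of the setup paragraph (the tree $T$ from the inverse system, the isometry $G\to[T]$, the identification of $U$ with $[S]$ for a level-wise uniformly branching $S$) followed by an appeal to Theorem~\ref{prop: dims equal}; the paper then declares the result ``immediate.'' Your extra remark about the standing hypothesis that every node of $T$ have at least two successors is a point the paper silently absorbs into its global convention on $T$, and your subsequence fix is sound since $|L_{n+1}|=|L_n|$ forces $|U_{n+1}|=|U_n|$ and hence leaves the ratio unchanged (though note that in the truly degenerate case of finite $G$ the right-hand $\liminf$ formula need not vanish even though all four dimensions do, so that case must genuinely be excluded rather than verified).
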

\begin{proof}
	  We have $U = \varprojlim (U_n,q_n)$  where $q_n= p_n \uhr {U_{n+1}} \colon U_{n+1} \to U_n$ is  an onto homomorphism   for each $n$. Letting $S_n = \{ f\uhr n \colon f \in U\}$, $S$ is a subtree of~$T$, the elements of $U_n$ can be identified with $S_n$,   and    $U$ can be  isometrically identified with $[S]$.   As above, $S$ is level-wise uniformly branching. 
	 The result  
	 is now immediate from Theorem~\ref{prop: dims equal}.
\end{proof}

We end the section with some discussion of fractal dimensions, in particular in profinite groups. The following example shows that $\ul \dim_B(U) < \ol \dim_B(U)$ is possible. 
 \begin{example}[essentially \cite{Barnea.Shalev:97}, Lemma 4.1] \label{ex:group E} Let $E$ be the Cantor space $C_2^\omega$ with symmetric difference $\Delta$ as the group operation. Let the inverse system be given by $L_n= C_2^n$, with the canonical map $p_n$ that omits the first bit in a binary string of length  $n+1$. For each pair of reals $\aaa, \beta$ with $0 \le \aaa \le \beta \le 1$ there is a closed subgroup $U $ of $G$ such that    $\ul \dim_B(U) = \aaa$ and $\ol \dim_B(U) = \beta$.      \end{example} 
\begin{proof} Let $R \sub \NN$ be a set with lower [upper]  density  $\aaa$ [$\beta$]. Let $H $ be the subgroup  of sequences that have a $0$ in positions outside $R$.  We have $|S_n| = 2^{|R \cap n|}$, $|T_n|= 2^n$. Thus $\ul \dim_B(U) = \aaa$ and $\ol \dim_B(U) = \beta$ by (\ref{eqn:lower box}) and (\ref{eqn:upper box}), respectively.  \end{proof}

  If $G$ is a  $p$-adic analytic pro-$p$ group~\cite{Dixon.etal:03}, then by \cite[Thm 1.1]{Barnea.Shalev:97},  \bc $\dim_H(U)=\dim_{\text{manifold}}(U) /\dim_{\text{manifold}}(G)$, \ec  where $\dim_{\text{manifold}}(X)$ is the dimension of a closed set $X$ in the $p$-adic manifold underlying $G$.  Barnea and Shalev~\cite[end of Section 3]{Barnea.Shalev:97}    assert that the proof  also works for the upper box dimension in place of $\dim_H$, so the same expression holds. Thus all the fractal dimensions agree for closed subgroups of $G$; in particular,  $\dim_H$ and $\dim_P$ agree.

The \emph{spectrum} of a profinite group is the set of Hausdorff dimensions of closed subgroups.   The group  $E$ given in Example~\ref{ex:group E} has the largest possible  spectrum $[0,1]$. Note that $E$ is not (topologically) finitely generated.   Recent  research addresses questions about  which spectra can occur for finitely generated groups, with particular constraints on the groups and the type of subgroups.  For instance,  de las Heras and Klopsch~\cite{delasheras2022prop}  proved that there is a 2-generated  pro-$p$ group $G$ such that with respect to any of the five standard ways for pro-$p$ groups to represent  it as an inverse limit,  each real in $[0,1]$ occurs as the dimension of a \textit{normal} closed subgroup.

 \section{Algorithmic  dimension of points} 
\n  The remainder of the paper is devoted to the \emph{algorithmic} theory of fractal dimensions. The  present section   reviews   the Kolmogorov complexity of points in  metric spaces, and describes the point-to-set principle for both Hausdorff and packing dimension  in that context. We mostly follow Lutz, Lutz and Mayordomo~\cite{Mayordomo.etal:23}. However, we   restrict ourselves   to  the standard gauge family $\theta_s(\aaa) = \aaa^s$, which has a precision family in their sense  \cite[Observation 2.1]{Mayordomo.etal:23}.  For   further background     see  \cite{Mayordomo:08}. 

\begin{convention} All logarithms will be taken with respect to base $2$. \end{convention}

 Suppose one is  given a     metric space $M$ together with   a countable  dense set $D\sub M$ and    an onto  function $r \colon \{0,1\}^* \to D$.
  For $w\in D$ one  lets \begin{equation} \label{eqn:  CCC}  C^{M,r}(w)= \min \{ C(u) \colon r(u) = w\}, \end{equation}   
  where $C(u)$ is the usual Kolmogorov complexity of a binary string $u$. (See Li~and Vitanyi~\cite{Li.Vitanyi:93}, the standard reference for   Kolmogorov  complexity.) %
 \begin{definition}[\cite{Mayordomo.etal:23}, (3.1) and (3.2)] \label{df:ccdim} For  $x\in M$ and any $\aaa > 0$,  
 one defines the \emph{Kolmogorov complexity of a point $x$ at precision   $\alpha$} by  \begin{equation} \label{eqn: define Calpha}  C_\alpha(x) =
\min \{C^{M,r}(w) \colon  w \in D \lland d(x,w)\le \alpha\}.\end{equation}

 The \emph{algorithmic  dimension} of $x \in M$ is 
 \begin{equation} \label{eqn:constr dim}  \textit{dim}(x) :  = \liminf_{\alpha \to 0^+} C_\alpha(x)/ \log (1/\alpha).\end{equation}
  The \emph{strong algorithmic  dimension} of $x \in M$ is 
 \begin{equation} \label{eqn:constr dim}  \textit{Dim}(x) :  = \limsup_{\alpha \to 0^+} C_\alpha(x)/ \log (1/\alpha).\end{equation} \end{definition} 
  These definitions can be readily relativised to an oracle set $A \sub \NN$:  in~(\ref{eqn:  CCC}), replace $C(u) $ by $C^A(u)$, the Kolmogorov complexity of a string~$u$ as measured by a universal machine equipped with $A$ as an oracle.
 Two \emph{point-to-set principles}     \cite[Thms 4.1,4.2]{Mayordomo.etal:23}   determine the Hausdorff and packing dimension of a \emph{set} $X \sub M$, respectively,  in terms of the relativised algorithmic  dimensions of the\emph{ points} in it:
 \begin{eqnarray} \label{eqn:PTSP}  \dim_H (X) & = &  \min_A \sup_{x \in X} \textit{dim}^A(x),\\ \dim_P (X) & = &  \min_A \sup_{x \in X} \textit{Dim}^A(x) \label{eqn:PTSP2}  \end{eqnarray}

\begin{definition}  Fix some computable injection   $\pi \colon   \NN^* \to \{0,1\}^*$, say \bc $(n_0, \ldots, n_{k-1}) \mapsto 0^{n_0 } 1 \ldots 0^{n_{k-1}} 1$.   \ec    The plain descriptive (Kolmogorov) complexity $C(y) $ of $ y \in \NN^*$    is defined to be $C (\pi(y))$;  in a similar way  one defines $K(y)$, $C(y\mid n)$, etc. \end{definition} 
The choice of $\pi$ will only affect these values up to a   constant.
It will be  clear from the context whether $C(y)$ is taken to be evaluated for $y \in \NN^*$ or for $y \in \{0,1\}^*$. There are now  two interpretations of the expression $C(y)$ for    a binary string $y$; however, they agree up to  a constant. 

 
 Recall that throughout,  $T \sub \NN^*$ is a finitely branching  tree such that each node has at least two successors. By $T_n$ we denote the set of strings in $T$ of length $n$. The space  $[T]$   carries the   ultrametric 
 given by~(\ref{eqn:distance tree}).  For real-valued expressions $E,F$ depending on the same parameters,  we write $E \le^+F$ for $E \le F + \+ O(1)$, and $E=^+ F$ for $E \le^+ F \le^+ E$.

  Suppose $T$ is computable. We     describe a computable structure on the metric space $[T]$ in the usual sense of computable analysis~\cite{brattka2010cca}.  That is, we describe a function $r \colon \{ 0,1\}^*  \to D$ as above such that the distance of $r(u)$ and $r(v)$ is a computable real, uniformly in strings $u, v$.   Let $\ul r \colon \{ 0,1\}^* \to T$ be the   computable bijection that is an isomorphism between  the length lexicographical orderings on $\{ 0,1\}^*$ and~$T$. Given a binary string~$u$, let  $r(u)$ be  a path on $[T]$ extending the string  $ \ul r(u)$. Since $T$ is computable, one  can choose $r(u)$ to be   uniformly computable in~$u$. Clearly  the distances are computable reals as required above. 
  
We next relate $C_\aaa(x)$, the Kolmogorov complexity of a point at precision~$\aaa$  in   (\ref{eqn: define Calpha}), to the initial segment  complexity of paths on $[T]$.
 \begin{prop} \label{cl: T1} Suppose that     $T\sub \NN^*$ is a computable tree such that each node has at least two successors. Let $r \colon \{ 0,1\}^*\to [T]$ define a computable structure as above. Let $f \in [T]$.  For each~$\aaa$ such that  $|T_{n-1}|^{-1} >\alpha \ge |T_n|^{-1}$, we have  \bc $ C(f\uhr n) - 2 \log n \lep  C(f\uhr n \mid n) \lep C_{\alpha}(f)\lep   C(f\uhr  n)$. \ec \end{prop}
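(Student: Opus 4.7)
The plan is to prove the three inequalities separately, using only the computable structure on $[T]$ and standard properties of Kolmogorov complexity.

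\textbf{First inequality} $C(f\uhr n) - 2\log n \le^+ C(f\uhr n \mid n)$: This is a generic fact about plain complexity: from a shortest program computing $f\uhr n$ given $n$, together with a self-delimiting description of $n$ (which can be encoded in $2\log n + O(1)$ bits), one recovers $f\uhr n$ unconditionally. I would just cite this standard inequality from \cite{Li.Vitanyi:93}.

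\textbf{Third inequality} $C_\alpha(f) \le^+ C(f\uhr n)$: Recall $\ul r\colon\{0,1\}^* \to T$ is the computable length-lexicographic bijection, and $r(u)\in [T]$ is a computable path extending $\ul r(u)$. Set $u = \ul r^{-1}(f\uhr n)$ and $w = r(u)\in D$. Then $w$ extends $f\uhr n$, so $d(f,w) \le |T_n|^{-1}\le \alpha$. Hence $C_\alpha(f) \le C^{M,r}(w) \le |u| \lep C(\ul r(u)) = C(f\uhr n)$ (the constant absorbing the cost of translating programs through $\ul r$).

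\textbf{Middle inequality} $C(f\uhr n \mid n) \le^+ C_\alpha(f)$: Suppose $w\in D$ witnesses $C_\alpha(f)$, i.e.\ $d(f,w)\le\alpha$ and $w = r(u)$ for some program $u$ with $|u| =^+ C_\alpha(f)$. The key observation is that, under the hypothesis $\alpha < |T_{n-1}|^{-1}$, the inequality $d(f,w)\le\alpha$ forces $f\uhr n = w\uhr n$: any disagreement position $k$ would give $d(f,w) \ge |T_k|^{-1}$, but $|T_k|$ is strictly increasing in $k$ since each node has at least two successors, so $|T_k|^{-1}\le |T_{n-1}|^{-1}$ would force $k\ge n$. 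Given $n$ and the program $u$, one computes $r(u)$ to sufficient precision to read off its first $n$ levels, yielding $w\uhr n = f\uhr n$. Hence $C(f\uhr n \mid n) \lep |u| =^+ C_\alpha(f)$.

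The only subtle point is the middle inequality, and specifically the observation that strict increase of $|T_k|$ (from the assumption that every node has at least two successors) converts the metric bound $d(f,w) < |T_{n-1}|^{-1}$ into exact agreement on the first $n$ symbols; without this, one could only conclude agreement on a coarser initial segment and the bound would lose a term. Everything else is routine bookkeeping with the computable bijection $\ul r$.
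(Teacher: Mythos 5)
Your decomposition and all three arguments coincide with the paper's proof: the leftmost inequality is cited as standard, the middle one uses that $\alpha < |T_{n-1}|^{-1}$ forces any witness $w$ to satisfy $w\uhr n = f\uhr n$, and the rightmost one passes through the name $u = \ul r^{-1}(f\uhr n)$. However, one link in your chain for the third inequality is false as written: $|u| \lep C(\ul r (u))$ cannot hold in general, since $\ul r$ is the length-lexicographic bijection and $T$ at least doubles at every level, so $|u| \ge n - O(1)$, whereas $C(f\uhr n)$ can be as small as $O(\log n)$ for a highly compressible path. The correct chain (and the one the paper uses) is $C_\alpha(f) \le C^{M,r}(w) \le C(u) \lep C(f\uhr n)$, where the last step holds because $u$ is computable from $f\uhr n$ via $\ul r^{-1}$; the length $|u|$ should not appear at all. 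Relatedly, in the middle inequality the minimum in the definition of $C^{M,r}(w)$ is over $C(u)$, not $|u|$, so the witness is a string $u$ with $r(u)=w$ and $C(u) = C_\alpha(f)$, and the conclusion should read $C(f\uhr n \mid n) \lep C(u)$. Finally, a cosmetic point: to conclude $k \ge n$ from a disagreement at position $k$ you need the strict inequality $|T_k|^{-1} < |T_{n-1}|^{-1}$ (supplied by $d(f,w) \le \alpha < |T_{n-1}|^{-1}$); the non-strict version you wrote only yields $k \ge n-1$. None of these issues changes the approach, which is the same as the paper's, but the chain for the third inequality does need the repair indicated.
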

\begin{proof}  The leftmost inequality is standard~\cite{Li.Vitanyi:93}. For the middle inequality,  
   suppose    $w \in D = \range(r)$ and  $d (w, f) \le \aaa$. Given $u \in  \{ 0,1\}^* $ such that $r(u)= w$ and $n\in \NN$, we can compute $w \uhr n$,  which   equals $f\uhr n$ since $|T_{n-1}|^{-1} >\alpha$.  Thus $ C(f\uhr n \mid n) \lep C(w)$. 
 Since $w$ was arbitrary this shows $C(f\uhr n \mid n)  \lep C_\alpha(f)$. 
 
 For the rightmost inequality,   given a string $y = f\uhr n \in T$,   compute the  string $u\in \{0,1\}^*$ such that $\ul r(u)= y$. Since $d(f, r(u)) \le \alpha$, this shows $C_\aaa(f) \lep C(u)\le^+ C(y)$. \end{proof}

 The algorithmic dimensions  $\textit{dim}(f)$ and $\textit{Dim}(f)$ (see Def.\ \ref{df:ccdim}) of a path $f \in [T]$ can be expressed in terms of the descriptive complexity of its initial segments. This is  well-known at least for the case of a binary tree $T$, going back to~\cite{Mayordomo:02}.
\begin{prop}  \label{cl: T2} Let $T$ be a computable tree such that   each string on $T$ has at least two successors on $T$. For any  $f\in [T]$   we have  \[\textit{dim}(f)= \liminf_n \frac{C(f \uhr n) }{ \log |T_n|} = \liminf_n \frac{K(f \uhr n) }{ \log |T_n|}\] 
 \[\textit{Dim}(f)= \limsup_n \frac{C(f \uhr n) }{ \log |T_{n-1}|} = \limsup_n \frac{K(f \uhr n) }{ \log |T_{n-1}|}\] 
\end{prop}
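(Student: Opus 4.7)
Write $t_n = |T_n|$. The approach is to combine Proposition~\ref{cl: T1} with the discrete nature of the ultrametric on $[T]$ to reduce the continuous-$\alpha$ definitions of $\textit{dim}(f)$ and $\textit{Dim}(f)$ to discrete limits along the levels of~$T$, and then to bridge from $C$ to $K$ by a standard inequality.

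First I would observe that the ultrametric $d$ on $[T]$ takes only the values in $\{0\} \cup \{t_k^{-1} : k \ge 0\}$. Hence for any $\alpha$ in the half-open interval $I_n := [t_n^{-1}, t_{n-1}^{-1})$, the condition $d(f,w) \le \alpha$ is equivalent to $w \in [f\uhr n]$, so $C_\alpha(f)$ is constant on $I_n$ with common value $c_n := C_{t_n^{-1}}(f)$. Specializing $\alpha = t_n^{-1}$ gives $\log(1/\alpha) = \log t_n$, and Proposition~\ref{cl: T1} then yields
\[ C(f\uhr n) - 2\log n \lep c_n \lep C(f\uhr n). \]
The hypothesis that every node of $T$ has at least two successors forces $t_n \ge 2^n$, so $\log t_n \ge n$. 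Dividing through by $\log t_n$ absorbs the $O(\log n)$ slack into an $o(1)$ term, whence $\liminf_n c_n/\log t_n = \liminf_n C(f\uhr n)/\log t_n$ and similarly for $\limsup$. Combining this with the constancy of $C_\alpha(f)$ on the intervals $I_n$ delivers the $C$-versions of both formulas.

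To obtain the $K$-versions, I would invoke the standard bound $C(y) \le K(y) \le C(y) + 2\log |y| + O(1)$. For $y = f\uhr n$ the discrepancy is $O(\log n)$, which once more vanishes after dividing by $\log t_n \ge n$.

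The main obstacle I anticipate is the careful reduction from the continuous-$\alpha$ limits to their discrete $n$-indexed forms. On $I_n$ the ratio $c_n/\log(1/\alpha)$ ranges over $[c_n/\log t_n,\, c_n/\log t_{n-1})$, so one has to verify that normalizing by $\log t_n$ (the value at the left endpoint) yields the $\liminf$ and $\limsup$ as stated; for the liminf the minimum is attained on the left, making the identification immediate, while for the limsup one needs to check that the endpoint $\log t_{n-1}$ can safely be replaced by $\log t_n$ in the relevant limit. The ``$\ge 2$ successors'' hypothesis, which forces $\log t_n \ge n$, is what makes all the $O(\log n)$ error terms negligible on the relevant scale and drives the whole argument.
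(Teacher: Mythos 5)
Your plan follows the same route as the paper's proof: Proposition~\ref{cl: T1} to trade $C_\alpha(f)$ for $C(f\uhr n)$ on each precision interval, the bound $\log|T_n|\ge n$ to absorb the $O(\log n)$ error terms, and a standard $C$-versus-$K$ comparison for the second equality in each line. Your discussion of the passage from the continuous parameter $\alpha$ to the discrete level index $n$ is in fact more explicit than the paper's, which compresses that step into a single displayed chain of equalities. The liminf case and the $C$/$K$ bridge are fine as you describe them (for the latter, the clean form of the bound for $y\in \NN^*$ is $K(y)\lep C(y)+2\log C(y)$ rather than $K(y)\lep C(y)+2\log|y|$, but either way the error is $o(\log|T_n|)$).

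The step you flag but do not close --- replacing the endpoint $\log t_{n-1}$ by $\log t_n$ in the limsup --- is the one genuinely delicate point, and it does not go through unconditionally. On $I_n=[t_n^{-1},t_{n-1}^{-1})$ the supremum of $C_\alpha(f)/\log(1/\alpha)$ is $c_n/\log t_{n-1}$, so the continuous definition literally yields $\textit{Dim}(f)=\limsup_n c_n/\log t_{n-1}$, and the identity $\limsup_n c_n/\log t_{n-1}=\limsup_n c_n/\log t_n$ requires $\log t_n/\log t_{n-1}\to 1$, i.e.\ that the branching number at level $n$ be $t_n^{o(1)}$. This is automatic for bounded branching (then $\log t_n-\log t_{n-1}=O(1)$ while $\log t_{n-1}\ge n-1$), but for a finitely branching subtree of $\NN^*$ in which some levels have branching number $2^{2^n}$, a sufficiently incompressible path has $\limsup_n c_n/\log t_{n-1}$ strictly larger than $\limsup_n c_n/\log t_n$, so the two limsups genuinely differ. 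Thus one must either impose a growth condition on $(|T_n|)$ or interpret $\textit{Dim}$ via a precision family adapted to the tree (so that only the left endpoints $\alpha=t_n^{-1}$ of the intervals $I_n$ are relevant). To be fair, the paper's own proof passes over exactly this point in silence, so your proposal is no weaker than the published argument; but ``one needs to check'' is precisely where the content of the limsup half lies, and the check is the part that needs either a hypothesis or a change of definition.
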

\begin{proof}   For the left  equalities,  recall that  for $|T_{n-1}|^{-1} >\alpha \ge |T_n|^{-1}$ we have $ C(f\uhr n) - 2 \log n \lep    C_{\alpha}(f)\lep   C(f\uhr  n)$  by Proposition~\ref{cl: T1}. By hypothesis   $\log|T_n| \ge n$, and so $\lim_n \log n/\log |T_n|=0$.  Therefore 
\begin{eqnarray*}  \liminf_{\alpha \to 0^+} C_\alpha(f)/ \log (1/\alpha)&=& \liminf_{n \to \infty} C(f\uhr n)/ \log |T_n| \\
   \limsup_{\alpha \to 0^+} C_\alpha(f)/ \log (1/\alpha)&= &\limsup_{n \to \infty} C(f\uhr n)/ \log |T_{n-1}| \end{eqnarray*}
For the right  equalities, note that for each $n$ and each $y \in T_n$, we have     $C(y) \lep K(y) \lep C(y) + K(n)\lep C(y) + 2 \log n$. 
\end{proof}

\section{\ML\ randomness and Levin-Schnorr Theorem on $[S]$.}
  \begin{definition} \label{def: unif measure on T}
	For a  finitely branching tree $S$ without leaves,  the path space	$[S]$ carries the  uniform     measure $\mu_S$ determined  by  the conditions that  \bi \item  $\mu_S[S]=1$  \item    $\mu_S([\sss\ape i]\cap [S])= \mu_S([\sss]\cap [S])/k_\sss$ where $k_\sss$ is the number of successors of $\sss$ on $S$.   \ei Note that  if $S$ is   level-wise uniformly branching, then $\mu_S([\sss]) = |S_n|^{-1}$ for each $\sss \in S_n$. 
	\end{definition} 
		If $S$ and the function  $\sss \mapsto k_\sss$ are computable, then $\mu_S$ is computable in the usual sense of~\cite{hoyrup2009computability}.  Thus,   algorithmic test notions  such as \ML\ tests (ML-tests) can be used for $[S]$ in place of Cantor space. Also, one can generalise the  Levin-Schnorr Theorem (see, e.g.,  \cite[3.2.9]{Nies:book}) to $[S]$.   For $X \sub S$ let $\ROpcl {S} X =  \{ f \in S \colon \exists n  \, f \uhr n \in X\}$ be the open subset of $[S]$ described by $X$.
 Let \[ R_{S,b} = \ROpcl {S} { \{ \sss \in S \colon K(\sss) \le -  \log \mu_S[\sss] - b\}}.\] 
 Our version of the Levin-Schnorr theorem is as follows.  
 \begin{lemma} \label{lem:SL}  $(R_{S,b})\sN b$ is a universal \ML\ test on $[S]$.  Thus, it is a ML-test, and $f\in [S] $ is ML-random iff there is $b\in \NN$ such that \bc $K(f\uhr n) > - \log \mu_S[f \uhr n ] - b$ for each~$n\in \NN$. \ec
 \end{lemma}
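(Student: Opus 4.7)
The plan is to adapt the classical Levin--Schnorr argument to the computable probability space $([S], \mu_S)$. I will establish (a) that $(R_{S,b})\sN b$ is itself a \ML\ test, and (b) that it absorbs every other \ML\ test, in the sense that $\bigcap_c U_c \sub \bigcap_b R_{S,b}$ for any \ML\ test $(U_c)\sN c$. Together (a) and (b) say the sequence is universal, and the ``iff'' characterization is then immediate: $f$ passes $(R_{S,b})$ precisely when $\exists b\,\forall n\, K(f\uhr n) > -\log\mu_S[f\uhr n] - b$, and by universality this is equivalent to $f$ being \ML-random.

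For (a), since $K$ is upper semicomputable on $\NN^*$ via the computable injection $\pi$, and $\mu_S[\sss]$ is computable in $\sss$, the set $\{\sss\in S : K(\sss)\le -\log\mu_S[\sss]-b\}$ is uniformly c.e., so $R_{S,b}$ is uniformly \ce\ open. To bound its measure, pick a prefix-free antichain $A_b \sub S$ with $\ROpcl{S}{A_b} = R_{S,b}$. The defining inequality rearranges to $\mu_S[\sss]\le 2^{-K(\sss)-b}$, so
\[ \mu_S(R_{S,b}) \,=\, \sum_{\sss \in A_b} \mu_S[\sss] \,\le\, 2^{-b} \sum_{\sss \in A_b} 2^{-K(\sss)} \,\le\, 2^{-b}, \]
where the last step is Kraft's inequality for prefix-free $K$: since $\pi$ is injective, $\sum_{\sss\in A_b} 2^{-K(\sss)} = \sum_{\sss\in A_b} 2^{-K(\pi(\sss))} \le \sum_{x\in \{0,1\}^*} 2^{-K(x)} \le 1$.

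For (b), I invoke the Machine Existence Theorem (KC-theorem). Given any \ML\ test $(U_c)\sN c$, enumerate for each $c$ a prefix-free antichain $A_c^* \sub S$ with $U_c = \ROpcl{S}{A_c^*}$ and $\sum_{\sss \in A_c^*} \mu_S[\sss] = \mu_S(U_c) \le 2^{-c}$. For each $b \in \NN$ and each $\sss \in A_{2b}^*$, issue the KC-request $\la \lceil -\log\mu_S[\sss] \rceil - b,\, \pi(\sss) \ra$. The weight of this request is at most $\mu_S[\sss]\cdot 2^b$, and the total weight over all $b$ and all $\sss \in A_{2b}^*$ is bounded by $\sum_b 2^{-2b}\cdot 2^b = \sum_b 2^{-b} \le 2$. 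The KC-theorem then yields a prefix-free machine $M$ with $K_M(\sss) \le -\log\mu_S[\sss] - b$ for each $\sss \in A_{2b}^*$, and by optimality of $K$, $K(\sss) \lep -\log\mu_S[\sss] - b$ on these strings. Shifting $b$ to absorb the $\lep$ constant, one sees that whenever $f \in \bigcap_c U_c$, for every $b$ some initial segment $\sss \prec f$ lies in the defining set of $R_{S,b}$, so $f \in \bigcap_b R_{S,b}$, completing universality.

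The main obstacle is purely bookkeeping: one must verify that the coding $\pi\colon \NN^*\to\{0,1\}^*$ lets both Kraft's inequality and the KC-theorem transfer unchanged to $S$-indexed objects, and that rounding the computable real $-\log\mu_S[\sss]$ up to an integer request length introduces only an additive $O(1)$ that is absorbed in $\lep$. Beyond this, the proof is the standard Levin--Schnorr argument with $\mu_S$ playing the role of the fair-coin measure on Cantor space.
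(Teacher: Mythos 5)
Your proof is correct and follows essentially the same route as the paper: the measure bound for $R_{S,b}$ via Kraft's inequality over a prefix-minimal antichain is identical, and your explicit KC-request construction for universality fills in exactly the details the paper defers to the Cantor-space argument (coding through $\pi$ so that the standard binary-output machine existence theorem suffices, rather than the $\NN^*$-output version the paper alludes to). The only slip is that your total request weight is $\sum_{b \ge 0} 2^{-b} = 2$ rather than $\le 1$; shifting to $A_{2b+2}^*$, or adding $1$ to each request length, fixes this at the cost of a constant that is absorbed when you shift $b$.
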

 \begin{proof} Clearly the   sets $R_{S,b}$ are  $\SI 1$ uniformly in $b$. Given $b$, let  $D$ be the set of prefix minimal strings $\sss \in S$ such that $K(\sss) \le - \log \mu([\sss])-b$. Then \[ 1 \ge \sum_{\sss \in D} \tp{-K(\sss) } \ge \sum_{\sss \in D}\tp{\log \mu([\sss]) + b}.\]
 This implies that $\mu_S \ROpcl S D \le \tp {-b}$.  
 
Universality is shown as in the proof for Cantor space $\cantor$; see \cite[3.2.9]{Nies:book}.  To adapt that proof, one first needs to write down a machine existence theorem for machines with binary input that output strings  in $\NN^*$. As this is well-known in principle, we omit the details.
 \end{proof} 
 
 While we are mainly interested in the case that  $[S]$ is uncountable,  the argument above also works  for countable $[S]$. 
In that   case, only the empty set is null, so every path is ML-random on $[S]$.
 \medskip

 \section{Algorithmic  dimension versus box dimension}

We show that in the case that a  computable subtree $S$ of $T$  is level-wise uniformly branching, the lower/upper  box dimension  of $[S]$ equals the  (strong) algorithmic  dimension of  a   ML-random   $f \in [S]$. To provide the desired proof of  Theorem~\ref{prop: dims equal}  using  the point-to-set principles,  this fact  will be used in relativised form.

\begin{prop} \label{cl:T3} \ 

\n  (i) Let $T$  be  a computable tree as above, and let  $S$ be a computable subtree of $T$.  For each    $f \in [S]$, we have 
\begin{eqnarray*} \textit{dim}(f) &\le &  \ul \dim _B([S]), \text{ and}  \\
\textit{Dim}(f) &\le&   \ol \dim _B([S]).    \end{eqnarray*}

\n (ii) Suppose $S$ is level-wise uniformly branching. Then the equalities hold in the case  that $f$  is \ML\ random in $[S]$ with respect to~$\mu_S$.  \end{prop}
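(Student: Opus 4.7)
My plan is to deduce both halves from the Kolmogorov--complexity characterisation of algorithmic dimension (Proposition~\ref{cl: T2}) and the formulas (\ref{eqn:dimB})--(\ref{eqn:dimB2}) for the box dimensions of $[S]$, so that everything reduces to comparing $C(f\uhr n)$ (or $K(f\uhr n)$) with $\log|S_n|$.

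For (i), since $S$ is computable one can, given $n$, uniformly enumerate $S_n$ and specify $f\uhr n$ by its index within this enumeration. This yields $C(f\uhr n \mid n) \lep \log|S_n|$, and hence $C(f\uhr n) \lep \log|S_n| + 2\log n$ after the standard unconditioning step (as in Proposition~\ref{cl: T1}). Because every node of $T$ has at least two successors we have $\log|T_n|\ge n$, so the $2\log n$ term is swallowed upon dividing by $\log|T_n|$ and taking $\liminf$ or $\limsup$. Proposition~\ref{cl: T2} together with (\ref{eqn:dimB})--(\ref{eqn:dimB2}) then yields $\textit{dim}(f) \le \ul\dim_B[S]$ and $\textit{Dim}(f) \le \ol\dim_B[S]$. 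Note that only computability of $S$, not level-wise uniform branching, is needed here.

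For (ii), the level-wise uniform branching hypothesis ensures $\mu_S([\sss]) = 1/|S_n|$ for every $\sss \in S_n$ by Definition~\ref{df:ubr}, which is the crucial identification linking the measure to the combinatorial quantity in the box-dimension formula. If $f$ is ML-random for $\mu_S$, then Lemma~\ref{lem:SL} supplies a constant $b$ with $K(f\uhr n) \ge -\log\mu_S([f\uhr n]) - b = \log|S_n| - b$ for every $n$. Dividing by $\log|T_n|$ the constant $b$ is negligible in the limit inferior/superior, and the $K$-version of Proposition~\ref{cl: T2} delivers the converse inequalities $\textit{dim}(f) \ge \ul\dim_B[S]$ and $\textit{Dim}(f) \ge \ol\dim_B[S]$.

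I do not anticipate any real obstacle. The only care required is in the bookkeeping: passing between $C$ and $K$, between conditional and unconditional complexity, and ensuring the additive $O(\log n)$ and $O(1)$ slack vanishes after dividing by $\log|T_n|\ge n$. The conceptual heart is simply that under level-wise uniform branching, addressing a node of $S_n$ and evaluating $-\log \mu_S$ of its basic cylinder carry the same amount of information, namely $\log|S_n|$ bits; this is precisely why the level-wise uniform branching hypothesis cannot be dispensed with in (ii), whereas (i) is purely a computability fact about $S$.
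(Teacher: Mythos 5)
Your proposal is correct and follows essentially the same route as the paper: part (i) comes down to the bound $C(f\uhr n)\lep \log|S_n|+O(\log n)$ (the paper gets it via the length-lexicographical index into $S_{\le n}$ rather than by conditioning on $n$ and unconditioning, but this is the same idea), and part (ii) combines Lemma~\ref{lem:SL} with Proposition~\ref{cl: T2} exactly as the paper does. The only point worth noting is that both your enumeration of $S_n$ from $n$ and the paper's map $\eta_S$ rely on the levels of $S$ being uniformly computable, an assumption implicit in the paper's notion of a computable tree.
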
 
\begin{proof} Throughout we will use (\ref{eqn:dimB}),(\ref{eqn:dimB2}) and  Prop.~\ref{cl: T2}.  Let $S_{\le n} = \bigcup_{k\le n} S_k$. 

\n (i)  Let $\eta_S \colon S \to \{0,1\}^*$ be    the  (computable) isomorphism between the  length-lexicographical orderings on $S$ and on $\{0,1\}^*$.  
Recall that   to define $C(y)$ and $K(y)$ for $y \in \NN^*$, we fixed a  computable injection $\pi \colon \NN^* \to \{0,1\}^*$. Also recall that logarithms are taken   in base $2$.

\smallskip 

\n {\bf Claim.}  {\it  Let   $y \in S_n$. Then $C(y)  \le^+ \log |S_{\le n} |$.   }

\smallskip

\n To verify  this, 
let $x = \eta_S(y)$.  We have     $|x| \le \lceil \log |S_{\le n}|\rceil$.  Since $x$ is binary,   $C(x) \le^+ |x| $. Since   the function $\pi \circ \eta_S^{-1}$ is computable, we have  $C(y) = C(\pi (\eta_S^{-1}(x)))\lep C(x)$. The claim follows. 

To verify  the  inequalities (i), let $y = f\uhr n$.  Note that \bc $\log |S_{\le n} | \le \log (n \cdot  |S_n|) \lep \log n + \log |S_n|$. \ec So by the claim, $C(y) \lep \log n  + \log |S_{n} |$. Using   that     $\log n   = o(\log |T_n|) $, 

\begin{eqnarray*} \dim(f) &=&  \liminf_n \frac{C(f\uhr n)} {\log |T_n| }  \\ 
& \le&  \liminf_n \frac{ \log n + \log |S_n|}{ \log |T_n| }= \ul \dim_B[S]. \end{eqnarray*}
 Similarly, 
 \begin{eqnarray*} \Dim(f) &=&  \limsup_n \frac{C(f\uhr n)} {\log |T_{n-1}| }  \\ 
 	& \le&  \limsup_n \frac{ \log n + \log |S_n|}{ \log |T_{n-1}| }= \ol  \dim_B[S]. \end{eqnarray*}

%
%
%

\n (ii) If $f$ is ML random then $f$ passes the universal test $(R_{S,b}) \sN b$ in  Lemma~\ref{lem:SL}. Thus, using that $S$ is level-wise uniformly branching, we have   \bc $\log |S_n| =^+ -\log \mu_S[f\uhr n]  \lep K(f\uhr n)$.  \ec
The equalities now follow from   Proposition~\ref{cl: T2}.  
\end{proof}

\begin{proof}[Proof of Th.\ \ref{prop: dims equal} using the point-to-set principle] 
Suppose   that $E\sub \NN$ is an oracle such that $S \le_T E$, $T \le_T E$, and $E$ can compute  from~$n$ the   number  of successors for $S$ at   level~$n$. Note that  the uniform measure $
\mu_S$ on $[S]$ is $E$-computable.

We  show $\dim_H[S] =  \ul \dim_B[S]$.  By      the point-to-set principle for Hausdorff dimension  (\ref{eqn:PTSP}),  it suffices to show that $\min_A \sup_{f \in [S]} \textit{dim}^A(f)= \ul \dim_B([S])$. We have the inequality ``$\le$"  by Prop.\ \ref{cl:T3}(i) relativised to $E$. For the converse inequality, note that 
in taking the minimum on the left hand side, we may assume that the oracle $A$ computes $E$, because replacing $A$ by $A \oplus E$  does not increase  $\textit{dim}^A(f)$. For each such $A$, if we choose $f $ to be  ML$^A$-random in $[S]$, by Prop.~\ref{cl:T3}(ii) we have  $\textit{dim}^A(f)=  \ul \dim_B(S)$.

The second  equality  in Th.\ \ref{prop: dims equal} is obtained in a similar fashion,  using  the point-to-set principle  for packing dimension (\ref{eqn:PTSP2}) instead.
\end{proof}

\def\cprime{$'$} \def\cprime{$'$}


\begin{thebibliography}{10}

\bibitem{Abercrombie:94}
A.~Abercrombie.
\newblock Subgroups and subrings of profinite rings.
\newblock In {\em Mathematical {P}roceedings of the {C}ambridge {P}hilosophical
  {S}ociety}, volume 116, pages 209--222. Cambridge {U}niversity {P}ress, 1994.

\bibitem{Barnea.Shalev:97}
Y.~Barnea and A.~Shalev.
\newblock Hausdorff dimension, pro-$p$ groups, and {K}ac-{M}oody algebras.
\newblock {\em Transactions of the American Mathematical Society},
  349(12):5073--5091, 1997.

\bibitem{brattka2010cca}
V.~Brattka, G.~Gherardi, and A.~Marcone,
\newblock A tutorial on computable analysis,
\newblock in S.~B. Cooper, B.~L{\"o}we, and A.~Sorbi (eds.),
{\em Computability and Complexity in Analysis},
Lecture Notes in Computer Science, vol.~5635,
Springer, Berlin, 2009, pp.~425--491.
	
\bibitem{Bishop.Peres:17}
C.~Bishop and Y.~Peres.
\newblock {\em Fractals in probability and analysis}, volume 162.
\newblock Cambridge University Press, 2017.

\bibitem{Dixon.etal:03}
J.D. Dixon, M.~Du~Sautoy, A.~Mann, and D.~Segal.
\newblock {\em Analytic pro-p groups}, volume~61.
\newblock Cambridge University Press, 2003.

\bibitem{delasheras2022prop}
I.~de~las~Heras and B.~Klopsch,
\newblock A pro-$p$ group with full normal Hausdorff spectra,
\newblock {\em Mathematische Nachrichten}, vol.~295, no.~1, pp.~89--102, 2022.
\newblock doi:10.1002/mana.202000164.

\bibitem{Li.Vitanyi:93}
M.~Li and P.~Vit{\'a}nyi.
\newblock {\em An introduction to {K}olmogorov complexity and its
  applications}.
\newblock Graduate Texts in Computer Science. Springer-Verlag, New York, second
  edition, 1997.

\bibitem{hoyrup2009computability}
M.~Hoyrup and C.~Rojas,
\newblock Computability of probability measures and Martin-Löf randomness over metric spaces,
\newblock {\em Information and Computation}, vol.~207, no.~7, pp.~830--847, 2009.

\bibitem{Mayordomo.etal:23}
J.~Lutz, N.~Lutz, and E.~Mayordomo.
\newblock Extending the reach of the point-to-set principle.
\newblock {\em Information and Computation}, 294:105078, 2023.

\bibitem{Mayordomo:02}
E.~Mayordomo.
\newblock A {K}olmogorov complexity characterization of constructive
  {H}ausdorff dimension.
\newblock {\em Inform. Process. Lett.}, 84(1):1--3, 2002.

\bibitem{Mayordomo:08}
E.~Mayordomo.
\newblock Effective fractal dimension in algorithmic information theory.
\newblock In {\em New Computational Paradigms: Changing Conceptions of What is
  Computable}, pages 259--285. Springer, 2008.

\bibitem{Nies:book}
A.~Nies.
\newblock {\em Computability and {R}andomness}, volume~51 of {\em Oxford Logic
  Guides}.
\newblock Oxford University Press, Oxford, 2009.
\newblock 444 pages. Paperback version 2011.

\bibitem{Nies.Segal.Tent:21}
A.~Nies, D.~Segal, and K.~Tent.
\newblock Finite axiomatizability for profinite groups.
\newblock {\em Proceedings of the London Mathematical Society},
  123(6):597--635, 2021.

\bibitem{Tricot:82}
C.~Tricot~Jr.
\newblock Two definitions of fractional dimension.
\newblock In {\em Mathematical Proceedings of the Cambridge Philosophical
  Society}, volume~91, pages 57--74. Cambridge University Press, 1982.

\end{thebibliography}
\end{document}